\newtheorem{theorem}{Theorem}[section]
\newtheorem{lemma}[theorem]{Lemma}
\newtheorem{problem}[theorem]{Problem}
\newtheorem{remark}[theorem]{Remark}
\def\R{\mathbb{R}}
\def\cE{\mathcal{E}}
\def\cA{\mathcal{A}}
\def\cF{\mathcal{F}}
\def\bE{\mathbb{E}}
\def\bP{\mathbb{P}}
\def\bR{\mathbb{R}}
\def\bN{\mathbb{N}}
\begin{document}

\title{Installation of renewable capacities to meet emission targets and demand under uncertainty}
\author{Nacira Agram$^{1,2},$ Fred Espen Benth $^{3}$, Giulia Pucci$^{1}$ }
\date{\today}
\maketitle

\begin{abstract}
This paper focuses on minimizing the costs related to renewable energy installations under emission constraints. We tackle the problem in three different cases. Assuming intervening once, we determine the optimal time to install and the optimal capacity expansions under uncertainty. By assuming the possibility of two or multiple interventions, we find that the optimal strategy is to intervene only once. We also prove that there are instances where capacity expansions should be delayed.
\end{abstract}

\footnotetext[1]{Department of Mathematics, KTH Royal Institute of Technology 100 44, Stockholm, Sweden. \newline
Email: nacira@kth.se, pucci@kth.se. 
}

\footnotetext[2]{ Work supported by the Swedish Research Council grant (2020-04697) and the Slovenian Research and
Innovation Agency, research core funding No.P1-0448.
}

\footnotetext[3]{
Department of Mathematics, University of Oslo, PO Box 1053 Blindern, 0316 Oslo, Norway. 
Email: fredb@math.uio.no.}

\paragraph{MSC(2010): } 37N10; 37N40 \\
\textbf{Keywords :}   Optimization with probabilistic constraints; Capacity expansion; Energy systems; Renewable energy; Emission reduction.

\section{Introduction}
Our main focus in the current paper is to study under uncertainty how to optimally install renewable energy sources such as solar, wind, hydro, or geothermal power plants covering energy demand.
By installing renewable capacity,  which generates power stochastically, we aim to meet the randomly varying energy demand of consumers while reducing reliance on fossil fuels and thereby decreasing greenhouse gas emissions. 

 To fullfil a given energy demand, we have the flexibility to either utilize existing fossil energy generation, or to (partly) substitute with installing renewable energy sources, reflecting the current energy landscape. It is important to note that, while fossil energy production can provide a stable supply of energy, it also comes with the drawback of generating emissions, such as carbon dioxide and other  greehouse gas pollutants. On the other hand, renewable energy production offers a cleaner and sustainable option for meeting energy needs but requires substantial investments in infrastructure and technology. 
In particular, in our study we will take into consideration a fixed initial investment cost required for the installation, and proportional costs that vary depending on the capacity size of the entity installed. 

Our objective is twofold: to minimize the costs associated with the installations of renewable energy, while ensuring that we meet our emissions reduction targets, particularly in the context of fossil energy usage. In this regard, we will consider the total accumulated emissions, which originate from energy produced using fossil sources and from energy imported from abroad. The emission target can be seen as a financial penalty added to the cost function that needs to be minimized or as a probability constraint to be verified. Our model seeks to find the optimal balance between cost minimization and environmental responsibility. By employing this approach, we contribute to the broader goal of achieving sustainable, cost-effective, and environmentally friendly energy systems, in line with the global shift towards cleaner energy sources and reduced carbon emissions.


We are constantly faced with decisions, whether they are our own or made by others. When contemplating decisions in mathematics, we employ the theory of optimal control. Given the prevalence of uncertain information in many of our decisions, we must address optimal control under uncertainty. Furthermore, the existence of feasible regions and other real-world requirements, necessitates our involvement in optimization under uncertainty with constraints.
Optimal control techniques under uncertainty empower decision-makers to devise robust strategies for minimizing the installation of renewable energy sources while adhering to emission constraints. By formulating mathematical models that integrate uncertain parameters, such as future energy demand projections and variability in renewable energy generation, decision-makers can optimize the deployment of renewables over time. 

In Dentcheva \cite{D} optimization problems are tackled where constraints involve uncertainty described by probability distributions (see also Calafiore and Dabbene \cite{CD}).
There are also some articles which explore various aspects of energy system optimization considering observed weather data and uncertainty, ranging from demand response to microgrid operation and renewable energy integration. They provide valuable insights into how weather-driven optimization can improve the efficiency, reliability, and sustainability of energy systems. Without being exhaustive, we refer to D{\'\i}az-Gonz{\'a}lez \textit{et al.} \cite{DSBR}, Mohseni \textit{et al.} \cite{MBKB}, Pfeifer \textit{et al.} \cite{PDPKD} and Ren \textit{et al.} \cite{RJWLL}. We also refer to A\"id {\it et al.} \cite{ADT}, Bassi\'ere {\it et al.} \cite{BDT} and Dumitrescu {\it et al.} \cite{DLT}, which examine Nash equilibria in capacity expansion problems within a mean field games framework involving renewable and conventional energy producers.

  In another stream of research, Zeyringer {\it et al.} \cite{Zeyr} analyse cost-minimizing capacity expansion problems of renewable power systems for the UK, where uncertainty is analysed by variability in solutions across different weather-years. A weather-year is providing information about the production of renewables (through wind speed and solar irradiation) and temperatures (driving the demand). In a more recent paper, by relaxing cost-optimality, Grochowicz {\it et al.} \cite{Groch} extracted feasible sets of capacity expansions (near-optimal capacities) showing the flexibility to account for robustness towards variations in production described by different weather-years. The complex interaction between extreme weather variability, power production and the energy system is studied in Grochowicz {\it et al.} \cite{GrochBloom}. These studies are not using any probability distribution to
span the possible scenarios weighted by their probabilities, but one is rather (sub-)optimizing the system for each weather-year. As we do not know the future but may at best only assess the stochastic scenarios, we must make decisions based on this uncertainty rather than fixing the scenarios and then making the decision.

Our main findings reveal that under uncertainty, it may be worthwhile to postpone green investments. This contrasts with the findings of Victoria \textit{et al.} \cite{VBG}. We also discover that it is not optimal to invest multiple times. Rather, one should opt for a single comprehensive investment. This perspective may differ from the approach many authorities plan for, which involves stepwise installation of renewables.  An example here is the Norwegian Government's plan for offshore wind capacity expansion, where the first
step was taken in 2024 with the auctioning of installation rights in S\o rlige Nordsj\o, and with further expansions planned for the future in this and other offshore areas.

The organization of the paper is as follows: Section 2 is devoted to the model formulation, where we provide all the necessary ingredients to describe the optimization problems. In Section 3, we study the one intervention case and illustrate the results obtained by solving a numerical example. Sections 4 and 5 are devoted to the twice- and multiple-interventions cases, respectively.

\section{Model formulation}\label{problem}
In this section, we introduce the optimization problem whose objective is twofold: to minimize the costs associated with the installations of renewable energy, while ensuring that we meet our emissions reduction targets from fossil energy usage.  A certain fixed amount of regulated power such as gas and nuclear is assumed to be installed in the system.

Assume there exist $k$ different technologies of variable energy production, including renewable options like hydro, onshore and offshore wind, and solar power. Each of these technologies can be installed in $n_i\in\mathbb N_0$, $i=1,\ldots, k$, different places within a specified geographic area. Let $d = n_1 + \ldots + n_k$ be the total dimension, and $V(t)\in [0,1]^d \in \bR^d$ be a vector of capacity factors at time $t\ge 0$. We denote by $D(t) \ge 0$ the energy demand (measured in MWh) at time $t$. We suppose that the capacity factor $V$ and the energy demand $D$ are stochastic processes defined on a given filtered probability space $(\Omega, \cF, \{ \cF\}_{t \ge 0}, \bP)$, which are $\cF_t$ predictable.  Let furthermore $T$ be a fixed terminal time horizon (e.g. year $2050$, say).

Let $t  \mapsto C_R(t)$ be an $\R^d$-valued function representing the installed capacities of given renewable technologies in specific locations. $C_R(t)$ is piece-wise constant in each coordinate with jumps occurring at deterministic times. These jumps are non-negative and non-decreasing, indicating planned capacity expansions. Our control consists in a sequence of capacity expansions $\xi_1,\xi_2,\ldots $ at fixed times $t_1 < t_2 < \ldots < T $ such that
\begin{equation*}
    C_R(t) = C_R(0^-) + \sum_{i\colon t_i \le t} \xi_i.
\end{equation*}
Here, $C_R(0^-)$ is the initial capacity and we suppose that $\xi_i \in \bR^d_+, t_i > 0$. We let $v = (t_1,t_2,\ldots; \xi_1,\xi_2,\ldots)$ be a decision variable that we can control. 

The proportional cost of installation per unit MW of \textit{variable }capacity  is measured by a given $\bR^d_+-$valued function $K_v(t)$. Thus, we allow the proportional costs to vary with time. Naturally, the costs should \textit{decrease} with time, as we measure everything at present time and there is a discounting that must be accounted for. Also, as technologies mature, one expects cheaper PV (photovoltaic) panels and windmills in nominal terms as well. The canonical choice here is pure discounting, $K_v(t) = \kappa_v \exp (- \rho t) $ for a discount rate $\rho > 0$ and constant $\kappa_v \in \bR^d_+$. 
The total accumulated cost up to time $t$, which we aim to optimize, is as follows  (using the notation $x^{\intercal}$ for the transpose of a column vector $x$)
\begin{equation*}
    \sum_{i\colon t_i \le t} K_f(t_i)^\intercal  \mathbbm{1}(\xi_i>0) + K_v(t_i)^\intercal \xi_i,
\end{equation*}
where $K_f \in \bR^d_+$ is the fixed cost for starting a capacity expansion (which can vary with location as well as technology, and can be thought of as the cost of setting up infrastructure etc...). These costs are also naturally changing with time, having pure discounting as the canonical choice. We denote by
$\mathbbm{1}(\xi > 0)$ the $d$-dimensional vector which is $1$ in coordinate $j$ when $e_j^\intercal \xi >0 $ ($e_j$ is the $j$th basis vector in $\R^d$) and zero otherwise (i.e., a vector which indicates with $1$ the places and technologies for which we do an expansion, and zero for the coordinates where there is no expansion). 

We have technologies for regulated power production as well, including gas, coal and nuclear. We denote by $C_F \in \bR_+$ (denominated in MWh) the maximal possible power production from these collectively, where the subscript $F$ signifies "fossil" as there is a proportional emission rate coming from these regulated power technologies. We suppose $C_F$ is fixed and given. 

The variable power production at time $t$, which depends on the capacity factors and the installed capacities in each location, is defined as 
\begin{equation*}
    P_R(t) \coloneqq C_R(t)^\intercal V(t),
\end{equation*}
while the regulated power production is 
\begin{equation}
\label{eq:regulated-production}
  \begin{aligned}
        P_F(t) = P_F(D(t)-P_R(t)) \coloneqq \begin{cases}
        0, \quad & \text{if } D(t) - P_R(t) \le 0, \\ 
        D(t) - P_R(t) \quad & \text{if } 0 < D(t) - P_R(t) \le C_F, \\ 
        C_F \quad &\text{if } D(t) - P_R(t) > C_F.
    \end{cases}
  \end{aligned}
\end{equation}
 In this setup for production, renewable power generation is given priority above regulated power production. This is the case in the German electricity system, say, to
incentive clean production.
Note that Equation \eqref{eq:regulated-production} accounts for various scenarios based on the interplay between energy demand $D(t)$ and renewable energy production $P_R(t)$.
In the first case, the demand for energy is entirely met by renewable energy production, and thus there is no need to generate energy from fossil sources, resulting in a power production $P_F$ of $0$.
In the second case, we will only produce the amount of fossil energy needed to bridge the gap between demand and renewable energy production.
Finally, the last case arises when the energy demand surpasses the maximum power production capacity from fossil sources and, in this situation, we will generate power equal to the maximum capacity, which may fall short of fulfilling the entire energy demand. In such instances, additional energy import is required to meet the remaining demand.
We assume here that whenever we produce more renewable $P_R$ than we need, e.g. $D(t) \le P_R(t)$, we can export this, while if we cannot meet demand with renewable and regulated power, e.g., $D(t) \ge P_R(t) + C_F$, we can import what is needed. Hence,  in this general setup we suppose that our power market is connected with external markets where there is always sufficient power to cover a deficit in production and always a possibility to off-load surplus production.

Fossil-fueled power production incurs emissions domestically. Let $e_F$ be the emission rate from $P_F(t)$, and define the total accumulated emissions at time $t$ to be:
\begin{equation} \label{1.4}
    E(t) \coloneqq \int_0^t e_FP_F(D(s)-P_R(s))ds.
\end{equation}
Notice that $E(t)$ implicitly depends on the installed capacity $C_R(s), s \le t$, through $P_F$. Alternatively, we can include in the accumulated emissions the emissions originating from imported power. For instance, Germany imports power from Poland, which is  to a large extent generated from coal. However, these emissions are allocated to Poland's emission budget  according to international regulations, and thus, Germany is not directly responsible for them in its power generation. This implies that Germany can offload some of its emissions simply by importing power from abroad, without being concerned about the emissions associated with it. To prevent such "export of emissions", one could impose a penalty for having to import power. Such a penalty can be seen as either a political measure to ensure energy self-sufficiency or as a response to public pressure on policymakers, as was the case in Norway where electricity bills were soaring in  2023 raising public concerns about "high price imports" from continental Europe. The formula for the total accumulated emissions takes the following form:
\begin{equation}
\label{1.5}
    E(t) \coloneqq \int_0^t \cE (D(s)-P_R(s))ds,
\end{equation}
where 
\begin{equation*}
  \begin{aligned}
        \cE(x) \coloneqq \begin{cases}
        e_F x \quad & \text{if } 0 \le x < C_F, \\ 
        0 \quad & \text{if } x<0, \\ 
       e_I(x-C_F) + e_F C_F \quad &\text{if } x \ge C_F,
    \end{cases}
  \end{aligned}
\end{equation*}
and $e_I$ is the import emission factor. Notice that when $D(t)-P_R(t) < 0$, we have in any case that $P_F(t)=0$ which is the case $x<0$ above. This emission rate from imports can be interpreted literally as the emissions, or as a way to penalize political risk from imports as discussed above. For simplicity, we will
 in this paper set $e_I = 0,$ and work with the emissions as defined in \eqref{1.4}.

Set $\cA$ to be the set of admissible controls $C_R$, meaning the set of all sequences $v= (t_1,t_2,\ldots; \xi_1, \xi_2,\ldots)$.
We consider the cost-minimization problem
\begin{problem}
\label{problem1}
\begin{equation*}
    \inf_{C_R \in \cA} J(C_R),
\end{equation*}
where 
\begin{equation*}
    J(C_R(t)) \coloneqq  \bE \sum_{i \colon t_i \le T} \left( K_f(t_i)^\intercal \mathbbm{1}(\xi_i> 0) + K_v(t_i)^\intercal \xi_i\right) + \int_0^{T}f(C_R(t))dt + g(E(T)).
\end{equation*}
\end{problem}
Here, $f \colon \bR^d_+ \to \bR_+$ can be thought of as an operational cost, i.e. the maintenance cost for the conservation of the plants and it depends on the installations we 
made, 
while  $g \colon \bR_+ \to \bR_+$ is penalizing the increase of accumulated emissions. 
We impose the following set of assumptions:
\begin{itemize}
    \item [(i)]There exists a constant $K'>0$, such that
 $f(C_R(0))<K'$. 
    \item [(ii)] $g$ is a non-decreasing function of at most polynomial growth of order $k \in \bN$. 
    \item [(iii)] $E(T)$ has finite $k-$moment (for the same $k$ as in (ii) above).
\end{itemize}
\begin{lemma}
\label{lemma2.2}
    Under the assumptions (i)-(iii), the Cost Minimization Problem \ref{problem1} is well-defined, meaning $$0 \le \inf_{C_R \in \cA}J(C_R) < \infty.$$
\end{lemma}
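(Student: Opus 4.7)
The plan is straightforward: the lower bound is immediate from non-negativity of all terms, and the upper bound follows by exhibiting a single admissible control with finite cost.

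For the lower bound, I would observe that $K_f \in \bR^d_+$ and $K_v(t) \in \bR^d_+$, while $\xi_i \in \bR^d_+$ and $\mathbbm{1}(\xi_i > 0) \in \{0,1\}^d$, so each summand in the intervention cost is non-negative. Since $f$ maps into $\bR_+$ and $g$ maps into $\bR_+$ by hypothesis, the integrand $f(C_R(t))$ and the penalty $g(E(T))$ are non-negative almost surely. Taking expectation preserves non-negativity, hence $J(C_R) \ge 0$ for every $C_R \in \cA$, yielding $\inf_{C_R \in \cA} J(C_R) \ge 0$.

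For the upper bound, I would test the problem with the trivial control $v^0 = (\emptyset; \emptyset)$, i.e.\ no interventions at all, so that $C_R(t) \equiv C_R(0^-) = C_R(0)$ for all $t \in [0,T]$. This is clearly admissible. Under $v^0$, the intervention sum vanishes, and
\begin{equation*}
J(C_R) = \int_0^T f(C_R(0))\,dt + \bE[g(E(T))] = T\,f(C_R(0)) + \bE[g(E(T))].
\end{equation*}
By assumption (i), $T f(C_R(0)) < T K' < \infty$. By assumption (ii), there exists a constant $M>0$ such that $g(x) \le M(1+x^k)$ for all $x \ge 0$, so that $\bE[g(E(T))] \le M(1 + \bE[E(T)^k])$, which is finite by assumption (iii). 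Combining the two estimates,
\begin{equation*}
\inf_{C_R \in \cA} J(C_R) \le J\!\left(C_R(0)\right) \le T K' + M\bigl(1+\bE[E(T)^k]\bigr) < \infty.
\end{equation*}

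The only subtlety worth flagging is the measurability/integrability of $E(T)$ itself, which is implicit in (iii): since $V$ and $D$ are $\cF_t$-predictable and the map $\cE$ (here, in the form \eqref{1.4} with $e_I = 0$) is continuous and bounded on $[0, C_F]$ after composition with $P_F$, the integral defining $E(T)$ is a well-defined non-negative random variable, so $\bE[E(T)^k]$ makes sense and the polynomial bound on $g$ applies. There is no further obstacle; the lemma is essentially a sanity check that the trivial strategy already gives finite cost.
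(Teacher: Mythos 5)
Your proof is correct and follows essentially the same route as the paper: the lower bound from non-negativity of all cost terms, and the upper bound by evaluating the do-nothing control and combining assumption (i) with the polynomial growth bound on $g$ and the finite $k$th moment of $E(T)$. The extra remark on measurability of $E(T)$ is a harmless addition; no gap.
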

\begin{proof}
    The do-nothing approach is feasible: Let $C_R(s) \equiv C_R(0)$, which is an admissible process. Then $$ J(C_R(0))=\bE \int_0^T f(C_R(0))dt + g(E(T)) = \bE f(C_R(0))T + g(E(T)).$$ Under the above assumption (ii) on $g$, we have for all $x \in \bR_+$ there exists a $K > 0$ such that
\begin{equation*}
    g(x) \le K(1+x^k).
\end{equation*}
Therefore, from the assumption (i) on $f$,  $$J(C_R(0)) \le K'T+K(1+\bE[E(T)^k]),$$ which is finite by assumption (iii). 
\end{proof}
We can weaken the moment assumption on $E$. By the definition \eqref{1.4} and using H\"older's inequality, we have that 
\begin{equation*}
    \left ( \int_0^T \cE(D(s)-P_R(s))ds \right )^k \le T^{k-1}\int_0^T \cE(D(s)-P_R(s))^kds.
\end{equation*}
We recall that $\cE(s)$ is at most linearly growing in $x$. Moreover, when $C_R(t)=C_R(0)$, we have $P_R(s)=C_R(0)^\intercal V(s)$. Notice that since $V(t) \in [0,1]^d,$ all the moments are trivially finite (in fact bounded by $1$). So, if we assume a finite, integrable over $[0,T]$, $k$th moment of $D(s)$, we have the desirable $k$th moment of $E(T)$.

In the general formulation, we use a penalty function 
$g$. However, a penalty function 
$g$ could aim to have a target 
$L$ for the accumulated emissions and require the installation of renewables to meet the probabilistic target.
\begin{equation}
    \bP(E(T) \ge L)=\varepsilon.
    \label{prob_constr}
\end{equation}
This means that one installs sufficient renewable capacity so that the total emissions violate the target by only an assigned small threshold probability $\varepsilon>0$. We could formulate the target as a probability less than or equal to $\varepsilon$, but it is more cost-optimal to go for maximum probability within the range allowed. The emissions are linearly increasing in $D(s)-P_R(s)$ so to reduce the probability we must increase the capacity of renewables. But this increases expenses, so we are better off by maximizing the possible probability, i.e., let it be equal to $\varepsilon$. 

By choosing the probabilistic constraint described in \eqref{prob_constr}, we get the following optimization problem under probabilistic constraints
\begin{problem}
    \label{problem2}
    \begin{equation*}
    \inf_{C_R \in \cA} J(C_R) \quad \textnormal{s.t.} \quad \mathbb{P}(E(T) \ge L ) = \varepsilon,
\end{equation*}
where 
\begin{equation*}
    J(C_R) \coloneqq \bE \sum_{i \colon t_i \le T} \left( K_f(t_i)^\intercal \mathbbm{1}(\xi_i> 0) + K_v(t_i)^\intercal \xi_i\right) + \int_0^{T}f(C_R(t))dt.
\end{equation*}
\end{problem}

\begin{lemma}  
\label{lemma2.4}  For every $\varepsilon \in [0,1]$ and for $L>0$ such that \begin{equation}
    \bP\left(e_F\int_0^T\min( D(s),C_F)ds \ge L \right) >> \varepsilon,
    \label{cond2}
\end{equation}  the cost minimization Problem \ref{problem2} is well-defined.
\end{lemma}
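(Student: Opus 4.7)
The plan is to verify both non-negativity of the infimum and existence of a feasible control with finite cost. Non-negativity is immediate: every summand of $J(C_R)$ in Problem \ref{problem2}---the fixed costs $K_f(t_i)^\intercal\mathbbm{1}(\xi_i>0)$, the proportional costs $K_v(t_i)^\intercal\xi_i$, and the operational integral $\int_0^T f(C_R(t))\,dt$---is non-negative by construction. So the real task is to exhibit at least one admissible $C_R$ satisfying the probabilistic constraint $\bP(E(T)\ge L)=\varepsilon$, from which $\inf J<\infty$ follows automatically.

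My approach is a one-parameter intermediate-value argument. Fix a direction $\xi\in\bR^d_+$ whose coordinates correspond to technologies with $\bP$-a.s.\ strictly positive capacity factor on a set of positive Lebesgue measure in $[0,T]$, pick a small deterministic $t_1>0$, and for $\alpha\ge 0$ define the admissible control $C_R^\alpha(t):=C_R(0)+\alpha\xi\,\mathbbm{1}_{\{t\ge t_1\}}$. Let $\phi(\alpha):=\bP(E^\alpha(T)\ge L)$. Pathwise, $E^\alpha(T)$ is continuous and non-increasing in $\alpha$ because $\cE$ is continuous and non-decreasing in its argument and $P_R^\alpha$ is affine and non-decreasing in $\alpha$; hence $\phi$ is non-increasing. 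On the full-probability event where $\xi^\intercal V$ is strictly positive on a positive-measure subset of $[t_1,T]$, $E^\alpha(T)\to 0$ as $\alpha\to\infty$, and dominated convergence yields $\phi(\alpha)\to 0$.

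At $\alpha=0$ we have $E^0(T)=\int_0^T\cE(D(s)-C_R(0)^\intercal V(s))\,ds\le e_F\int_0^T\min(D(s),C_F)\,ds$ pointwise. Hypothesis \eqref{cond2} gives $\bP\bigl(e_F\int_0^T\min(D(s),C_F)\,ds\ge L\bigr)\gg\varepsilon$, and the strong inequality ``$\gg$'' is exactly the slack that absorbs the mitigating effect of the initial capacity $C_R(0)$, so I expect $\phi(0)>\varepsilon$. Combined with $\phi(\alpha)\to 0$ and continuity of $\phi$, the intermediate-value theorem then produces $\alpha^\star\ge 0$ with $\phi(\alpha^\star)=\varepsilon$. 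The resulting control has cost $K_f(t_1)^\intercal\mathbbm{1}(\alpha^\star\xi>0)+K_v(t_1)^\intercal\alpha^\star\xi+\int_0^T f(C_R^{\alpha^\star}(t))\,dt<\infty$, by boundedness of $\alpha^\star\xi$, assumption (i) on $f$, and finiteness of $T$.

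The main obstacle I anticipate is the continuity of $\phi$ at the critical level. Since $\phi$ is monotone, the only obstruction to continuity at the target $\alpha^\star$ would be an atom of the law of $E^{\alpha^\star}(T)$ at $L$, in which case $\phi$ could jump across $\varepsilon$. The standard remedy is to assume mild regularity of the joint law of $(D,V)$---for instance absolute continuity---so that $E^\alpha(T)$ has a density for each $\alpha$ and no atoms arise; alternatively one can enlarge $\cA$ to include $\cF_0$-measurable randomised initial installations and mix two levels of $\alpha$ straddling $\varepsilon$, producing exactly the target probability by a convex combination. Either route closes the argument, yielding $0\le\inf_{C_R\in\cA}J(C_R)<\infty$.
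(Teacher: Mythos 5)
Your proposal follows essentially the same route as the paper: fix a single installation, view the exceedance probability as a function of the installed capacity, check that it starts above $\varepsilon$ (via \eqref{cond2}) and tends to $0$ as the capacity grows, and invoke the intermediate value theorem to hit $\varepsilon$ exactly. The one substantive difference is that you explicitly flag that pathwise continuity of $E^\alpha(T)$ in $\alpha$ does not by itself give continuity of $\alpha\mapsto\bP(E^\alpha(T)\ge L)$ (an atom of the law at $L$ could make the monotone map jump over $\varepsilon$) and you propose a no-atom assumption or randomisation to close this; the paper passes over this point silently, so your version is, if anything, the more careful of the two.
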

\begin{proof} 
The proof relies on standard probability tools.
Heuristically, we can install initially sufficient capacity to honour the probabilistic constraint \eqref{prob_constr} and show that the problem is feasible, so we just need to prove that there exists a $\xi>0$ such that 
\small
\begin{equation*}
    \bP\left [ \int_{0}^T (D(s)-\xi V(s))\mathbbm{1}_{\{s \colon D(s) - \xi V(s) \in [0,C_F]\}} ds +  \int_0^{T} C_F \mathbbm{1}_{\{s \colon D(s) - \xi V(s) > C_F \}} ds \ge \frac{L}{e_F} \right] =\varepsilon.
\end{equation*} 
Without loss of generality we supposed that the capacity installed at time zero $C_R(0) = 0$.  
By naming 
\begin{equation*}
    I(\xi) \coloneqq \int_{0}^T (D(s)-\xi V(s))\mathbbm{1}_{\{s \colon D(s) - \xi V(s) \in [0,C_F]\}} ds +  \int_0^{T} C_F \mathbbm{1}_{\{s \colon D(s) - \xi V(s) > C_F \}} ds, 
\end{equation*}
we notice that $I$ is continuous with respect to $\xi$. Thus, by proving that the function $\bP\left(I(\xi) \ge \frac{L}{e_F} \right) $ covers a neighborhood of $\varepsilon$, we can use the Intermediate Value Theorem to show that there exists a feasible $\xi$. This is straightforward, because if $\xi \to 0$, an upper bound comes from condition \eqref{cond2}, while sending $\xi \to \infty$ results in $I(\xi) \to 0$, thus having $\bP\left( I(\xi) \ge \frac{L}{e_F}\right) \to 0.$
\end{proof}
\begin{remark}
    Condition \eqref{cond2} tells us that, when setting the threshold parameters 
$\varepsilon$ and $L$ for the probability constraint, it is important to choose realistic and achievable values. If, for example, the threshold 
$L$ is set too high, i.e. it far exceeds any feasible amount of tons of $CO_2$
  that could be managed within the given energy demand over time, the probability of meeting this threshold will be zero and the probability constraint in \eqref{prob_constr} loses its significance. On the other hand, \eqref{cond2} is also telling us that the do-nothing approach is not feasibile, and we need a positive capacity expansion to meet the constraint. 
\end{remark}

\begin{remark}
    We can re-formulate the probability constraint in \eqref{prob_constr} as \newline $\bE[\phi(E(T))] = 0$ with $\phi= \mathbbm{1}(x > L) - \varepsilon$. Introducing a Lagrange multiplier, we obtain the optimization problem
\begin{equation*}
    \inf_{C_R \in \cA}
 \coloneqq  \bE \sum_{i \colon t_i \le T} \left( K_f(t_i)^\intercal \mathbbm{1}(\xi> 0) + K_v(t_i)^\intercal \xi_i\right) + \lambda\phi(E(T)).
\end{equation*}
For positive $\lambda$'s $g(x) \coloneqq \lambda \phi(x)$ becomes a non-decreasing function in $x$ being bounded by $\lambda$. 
\end{remark}


Considering different technologies such as solar, wind, hydro, etc. in different locations can provide a detailed understanding of the performance and cost-effectiveness of each technology in each location. At the same time, a significant amount of information is needed to accurately model and compare the different technologies across the space, like data on resource availability, installation and maintenance costs, efficiency, and environmental impact. A natural alternative is to aggregate all the renewable energy sources in a single category and choose $d=1$.  It is important to note that, while individual renewable technologies have distinct cost structures, when combined, we must refer to average characteristics and costs, which can serve as a useful proxy for the overall investment required in the renewable sector. In the remaining of the paper we consider the situation
where we have aggregated all renewables to have $d=1$.  Our main focus in the sequel of the paper will be on Problem \ref{problem2}, the cost-minimization with 
probabilistic constraint. We will include additional restrictions on the admissible capacities as well in our analysis.

\section{Case I: One intervention}

%
In this section, we will solve a constrained minimization problem in which a single installation at a deterministic time is possible.
The aim is to minimize installation costs while meeting a constraint on emissions. 

Consider a situation where we can intervene once at time $0 \le t \le T $ with capacity expansions $\xi \in (0,\xi_M]$ for some $\xi_M \in \bR$  maximum capacity to be installed.  We suppose that the initial installed capacity is $C_R(0^-)=0$.  Choosing a discounting of the proportional costs and no fixed costs, the constrained cost minimization problem is as follows: 
\begin{equation}
    \min_{0\le t \le  T, \xi \in (0,\xi_M]} \kappa \xi e^{-\rho t} \qquad \textnormal{subject to } \qquad \mathbb{P}(E(T) > L ) = \varepsilon.
    \label{1intervention}
\end{equation}
To define the accumulated emissions, we first need to define some variables.

The power production at time $s \in [0,T]$ is given by: 
\begin{equation*}
    P_R(s) \coloneqq C_R(s)V(s)= V(s) \xi\mathbbm{1}{[s \ge t]}. 
\end{equation*} 
This will give
\begin{equation*}
    D(s)-P_R(s) \coloneqq \begin{cases}
  D(s) & \textnormal{if } s < t,  \\
       D(s)-\xi V(s)  & \textnormal{if } s \ge t.
\end{cases}
\end{equation*}
For simplicity, we will  suppose 
\begin{equation*}
    D(s) - P_R(s) \le C_F,
\end{equation*}
which means that, in order to meet energy demand, one does not need to import energy from abroad. In particular, we have that $D(s)\leq C_F$ when $s\leq t$, i.e., we have reserve capacity in fossil power generation to fully cover demand, at least up to some time. Further, the demand is bounded.

Therefore, the total accumulated emissions at time $t$ are given by
\begin{equation}\label{AE}
    E(T) \coloneqq E(t,\xi,T) = e_F \int_{t}^T (D(s)-\xi V(s))^+ ds + e_F \int_0^{t} D(s)ds,
\end{equation}
with corresponding partial derivatives 
\begin{equation*}
   \begin{aligned}
        &\partial_t E(t,\xi,T) = e_F D(t) - e_F(D(t)-\xi V(t))^+ =e_F \min(D(t),\xi V(t)) > 0, \\
        &\partial_\xi E(t,\xi,T)  = - e_F \int_t^T V(s)\mathbbm{1}(D(s)- \xi V(s)>0)ds \le 0.
   \end{aligned}
\end{equation*}
 Here we use the notation $\partial_x=\partial/\partial x$ for partial differentiation with respect to a variable $x$.
Notice that the cost function $t \mapsto \exp(-\rho t) \kappa \xi$ is decreasing, while $ t \mapsto E(t,\xi, T)$ is increasing with fixed $\xi$. Hence, to minimize the costs, it is reasonable to intervene as late as possible, but this must be balanced against intervening sufficiently early to reach the emission target. In particular, our claim is that the optimal intervention time corresponds to a trade-off between intervening  sufficiently early to reach the emission target and sufficiently late to reduce costs.

 In fact, if the intervention time is set to $t=T$, then the accumulated emissions up to the final time is given by $E(T)=e_F\int_0^T D(s)ds$. But $L$ and $\varepsilon$ should naturally be set in a practical situation to be such that
\begin{equation}
    \bP\left(e_F\int_0^T D(s)ds > L \right) >> \varepsilon.
    \label{cond}
\end{equation}
In this situation, we do not introduce any renewable power production, and only rely on fossil fuels (domestic and imported). While undoubtedly effective in cost reduction, this strategy is highly likely to fail in meeting emission constraints. Therefore, the optimal intervention time should be $ \hat{t}<T$. 

To continue the analysis, we make the further simplification that demand and capacity factor are both time-independent random variables, i.e., $D$ and $V$. Furthermore, we assume that
\begin{equation}
    \bP\left(D < \xi_M V \right)\approx 0,
    \label{DV}
\end{equation} i.e., renewable energy generation will not be able to meet all of the demand even at maximal possible installation. This is a natural assumption given that 
wind and solar power is depending on weather, and there may be extended periods of very low production with simultaneous high demand (heat waves combined with
"Dunkelflaute", say).
The total accumulated emission \eqref{AE} takes the form
\begin{equation*}
    E(t,\xi,T)=e_F(T-t)(D-\xi V)^+ + te_FD \approx e_FTD -e_F(T-t)\xi V.
\end{equation*}

\begin{remark}
We may also, roughly speaking, {\it define} the variables $V$ and $D$ by the relations $V:=\int_t^TV(s)ds/T-t$ and $D:=\int_t^TD(s)ds/T-t$. Obviously, this is at stake with the
fact that $t$ is not fixed, but provides a simplification that is useful in getting insight into the capacity expansion problem. 
\end{remark}

Condition \eqref{DV} is expressing that it is basically impossible to cover all demand by renewables only (or, the probability of this happening is negligible). The emission constraint in \eqref{1intervention} then becomes
\begin{equation*}
    \bP \left( \frac{TD-\frac{L}{e_F}}{V}>\xi(T-t) \right ) = \varepsilon. 
\end{equation*}
Hence, $z_\varepsilon \coloneqq \xi \times (T-t)$ is the $\varepsilon$-upper quantile of the random variable $(TD-(L/e_F))/V$. We assume $z_\varepsilon$ to be strictly positive and if
 \begin{equation}
    0 < \frac{z_\varepsilon}{T}     \le \xi_M,
    \label{quantile}
\end{equation}
 we will have an admissible unique capacity expansion 
\begin{equation}
    \xi_\varepsilon \coloneqq \frac{z_\varepsilon}{T-t}.
    \label{xi}
\end{equation}
 At this point we have all the necessary tools to determine the optimal time to intervene.

\begin{theorem}
\label{1int}
    Consider a suitable target set for the accumulated emissions, denoted as $L > 0$, along with a threshold value $\varepsilon > 0$ as in \eqref{cond}. 
    Moreover, we assume that both demand $D$ and capacity factor $V$ are time-independent random variables satisfying \eqref{DV} and such that the $\varepsilon-$upper quantile $z_\varepsilon$ satisfies \eqref{quantile}.
    
    According to the discounting rate $\rho$, we distinguish between three cases:
    \begin{itemize}
        \item [1.]    If $\frac{1}{T} \le \rho \le \frac{\xi_M}{z_\varepsilon}$,      
 then the optimal solution to the installation problem \eqref{1intervention} is uniquely determined by 
    \begin{equation}
         \hat{t}= T-\frac{1}{\rho}, \qquad \hat{\xi}_\varepsilon \coloneqq \rho z_\varepsilon.
          \label{1}
    \end{equation}
    \item [2.] If $ \rho < \frac{1}{T}$, then the optimal solution to \eqref{1intervention} is given by 
    \begin{equation}
          \hat{t}= 0, \qquad \hat{\xi}_\varepsilon \coloneqq  \frac{z_\varepsilon}{T}.
          \label{2}
    \end{equation}
    \item [3.]If $ \rho >  \frac{\xi_M}{z_\varepsilon}$ , then the optimal solution to \eqref{1intervention} is 
    \begin{equation}
          \hat{t}= T - \frac{z_\varepsilon}{\xi_M}, \qquad \hat{\xi}_\varepsilon = \xi_M.
                    \label{3}
    \end{equation}
    \end{itemize}

  \end{theorem}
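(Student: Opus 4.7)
The plan is to use the emission constraint to eliminate the capacity variable $\xi$, reducing the problem to a univariate minimization in $t$; after that, the three cases correspond to whether the unconstrained minimizer lies in the interior, to the left, or to the right of the feasibility interval for $t$.

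First I would note that under assumption \eqref{DV}, the discussion leading to \eqref{xi} shows that the emission constraint $\mathbb{P}(E(T)>L)=\varepsilon$ reduces to the deterministic relation $\xi(T-t)=z_\varepsilon$. Condition \eqref{cond} rules out $t=T$, so I may solve $\xi=z_\varepsilon/(T-t)$. The admissibility constraint $\xi\in(0,\xi_M]$ then becomes $t\in[0,T-z_\varepsilon/\xi_M]$, which is nonempty thanks to \eqref{quantile}. Inserting this into the objective, the problem becomes
\begin{equation*}
\min_{t\in[0,T-z_\varepsilon/\xi_M]}\varphi(t),\qquad \varphi(t)\coloneqq \kappa z_\varepsilon\,\frac{e^{-\rho t}}{T-t}.
\end{equation*}

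Next I would analyze $\varphi$ by computing $(\log\varphi)'(t)=-\rho+1/(T-t)$, whose unique zero on $(-\infty,T)$ is $t^{*}=T-1/\rho$, and $(\log\varphi)''(t)=1/(T-t)^{2}>0$. Hence $\varphi$ is strictly log-convex on $(-\infty,T)$, strictly decreasing on $(-\infty,t^{*}]$, strictly increasing on $[t^{*},T)$, and $t^{*}$ is its unique minimizer on $(-\infty,T)$. The case analysis is then determined by the position of $t^{*}$ relative to $[0,T-z_\varepsilon/\xi_M]$. In case~1, $1/T\le\rho\le\xi_M/z_\varepsilon$ is equivalent to $0\le t^{*}\le T-z_\varepsilon/\xi_M$, so the unconstrained minimizer is feasible; substituting $\hat{t}=T-1/\rho$ into $\xi=z_\varepsilon/(T-t)$ yields $\hat{\xi}_\varepsilon=\rho z_\varepsilon$, with uniqueness from strict log-convexity. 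In case~2, $\rho<1/T$ forces $t^{*}<0$, so $\varphi$ is strictly increasing on the feasibility interval and the unique minimum is at $\hat{t}=0$ with $\hat{\xi}_\varepsilon=z_\varepsilon/T$. In case~3, $\rho>\xi_M/z_\varepsilon$ forces $t^{*}>T-z_\varepsilon/\xi_M$, so $\varphi$ is strictly decreasing on the feasibility interval and the unique minimum is at the right endpoint $\hat{t}=T-z_\varepsilon/\xi_M$ with $\hat{\xi}_\varepsilon=\xi_M$.

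The main obstacle is essentially bookkeeping: one must justify that the probabilistic constraint is binding at the optimum, so that the reduction to $\xi(T-t)=z_\varepsilon$ is legitimate. This follows from the observation (made in the paragraph preceding \eqref{prob_constr}) that decreasing $\xi$ decreases the cost while increasing the emission probability; hence any optimum must saturate the constraint. Everything else is one-dimensional calculus of the strictly log-convex function $\varphi$, together with the straightforward algebraic identifications of the three parameter regimes with the relative position of $t^{*}$ inside the feasibility interval.
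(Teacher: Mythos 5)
Your proposal is correct and follows essentially the same route as the paper: eliminate $\xi$ through the binding quantile relation $\xi(T-t)=z_\varepsilon$, reduce to a one-dimensional minimization over the feasibility interval $[0,\,T-z_\varepsilon/\xi_M]$, locate the critical point $t^{*}=T-1/\rho$, and split into cases according to its position relative to that interval. Your log-convexity argument merely makes rigorous the monotonicity claims the paper dismisses as ``simple computations,'' which is a welcome but not substantively different refinement.
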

 \begin{proof}
By choosing a suitable target set $L$ and threshold $\varepsilon$ that satisfy \eqref{cond}, we
have naturally ruled out $t=T$. As we can move $t$ arbitrary close to $T$, from \eqref{quantile},  we obtain a natural upper constraint on the time $t$,
\begin{equation*}
    t \le T - \frac{z_\varepsilon}{\xi_M}.
\end{equation*}
Considering the minimization problem \eqref{1intervention} with the constraint \eqref{xi} we get first order condition for an interior optimum as
\begin{equation*}
    \frac{1}{T-  \hat{t}} = \rho,
\end{equation*}
or 
\begin{equation*}
 \hat{t}= T-\frac{1}{\rho},
\end{equation*}
which is feasible whenever 
\begin{equation*}
    \rho \ge \frac{1}{T} \quad \& \quad \rho \le  \frac{\xi_M}{z_\varepsilon}.
\end{equation*}
In this case, we install $ \hat{\xi}_\varepsilon \coloneqq \rho z_\varepsilon$ capacity which verifies naturally the constraint $\hat{\xi}_\varepsilon < \xi_M$. This concludes the proof for \eqref{1}. 

If $ \rho < \frac{1}{T}$, then $ \hat{t}$ as in \eqref{1} is not feasible. By simple computations one can see that in this case, the constrained cost function increases with time, thus the optimal time is achieved at 
 $ \hat{t}= 0$ and consequently  $ \hat{\xi}_\varepsilon \coloneqq \frac{z_\varepsilon}{T}$ which is feasible by \eqref{quantile}, this proves \eqref{2}.
 
 Finally, if $\rho > \frac{\xi_M}{z_\varepsilon}$, then the optimal time $  \hat{t} = T - \frac{1}{\rho}$ is feasible, but the corresponding installation exceeds the maximum value as $\hat{\xi} = \rho z_\varepsilon > \xi_M$. In this case, as the constrained cost function decreases with time, the constraint forces us to intervene at   
 $   \hat{t} = T - \frac{z_\varepsilon}{\xi_M}$ and install the maximum capacity $ \hat{\xi}_\varepsilon = \xi_M$, as in \eqref{3}. This completes the proof. 
 \end{proof}

 \begin{remark}
      Notice that, if assumption \eqref{quantile} does not hold, then it may be possible that, for an optimal intervention $\hat{t}$, the corresponding capacity expansion $\hat{\xi}_\varepsilon \coloneqq \frac{z_\varepsilon}{T- \ \hat{t}} > \xi_M$.  This means that the capacity expansion associated to the optimal intervention time is not feasible, i.e. we cannot satisfy the emission target.
\end{remark}

\subsection{Analysis discussion}
In this part, we will discuss the key findings from our analysis  on the one-intervention case and interpret their implications. 

In Victoria \textit{et al.} \cite{VBG}, decarbonization pathways for Europe over a 30-year horizon are examined, assuming constant district heating penetration, constant annual heat demand, and fixed electricity transmission capacities after 2030. The authors found that a transition characterized by early and steady $CO_2$ reductions, particularly in the first decade, is more cost-effective than trajectories requiring more drastic reductions later. This led us to ask: Can the presence of uncertainty influence this result? We incorporated accumulated emissions over the entire time horizon and uncertainty in verifying a given emission bound in the transition to a low-carbon energy system. In this context, we find that intervening and installing new capacity can  minimize the costs associated with renewable energy installations while satisfying emission constraints. The choice of the optimal time to intervene depends on the size of the discount factor; it may be optimal to install all capacity immediately or to wait for an optimal time: fixing $T = 30$, i.e. a time horizon up to about $2055$, with a discount rate $\rho > \frac{1}{30} \approx 3\%$ (annually) it is optimal to wait $30 - 1/\rho$ years before installing renewable capacity. With a $5\%$ discount rate, we get $1/\rho=20$, so we should delay the installation by $10$ years from the start. Additionally, we need to account for $\xi_M$ and $z_\varepsilon$ in this discussion, but if we suppose they satisfy hypothesis \eqref{quantile}, then this is the case. Our conclusion is that there are cases when it is optimal to wait for the installation of renewable and this is at stake with the findings in Victoria \textit{et al.} \cite{VBG}. If we do not have an interior solution, e.g. whether $\rho< \frac{1}{30}$, then  $t=0$ is the optimal intervention time. Finally, if $\rho > \xi_M/z_\varepsilon$ then $t= T - z_\varepsilon/\xi_M$ is the optimum.

Due to the urgency of addressing climate change and the need to limit global warming, it is reasonable to consider a relatively low threshold for $L$. A commonly used benchmark in climate science is the concept of the carbon budget, which represents the maximum amount of $CO_2$ emissions that can be emitted while remaining within a given temperature goal. For example, the  Intergovernmental Panel on Climate Change (IPCC) estimated in $2018$ that to limit with medium to high confidence global warming to $1.5$ degrees Celsius above pre-industrial levels, the remaining carbon budget for the whole world is between $420$ and $580$ gigatons of $CO_2$. This budget should be divided among different countries and sectors according to their historical emissions and contributions to global warming. 

We illustrate our results in Figure \ref{fig1},  for three different cases of the discounting factor $\rho$. We choose  the emission target $L=2700$ tons of C$02$, the emission threshold $\varepsilon=0.2$, the maximum capacity $\xi_M = 1000$ GW, $e_F=0.7$ tons of C$02/$GW per year and $\kappa=1$.  The random variables for capacity factor  $V$ and energy demand $D$ follow two uniform distributions, respectively in $[0,1]$ and $[1000,1500]$ GW. The result shows that, in the first two cases, the optimal theoretical time value coincides with the numerical one within the constraint. While in the third case, when $\rho=0.08$, the optimum for the constrained optimization problem does not coincide with the minimum of the cost function. This occurs because the optimal installation linked to the minimizing time does not satisfy the emission constraint, and thus we need to intervene earlier. 
\begin{figure}[H]
    \centering
  \includegraphics[scale=0.20]{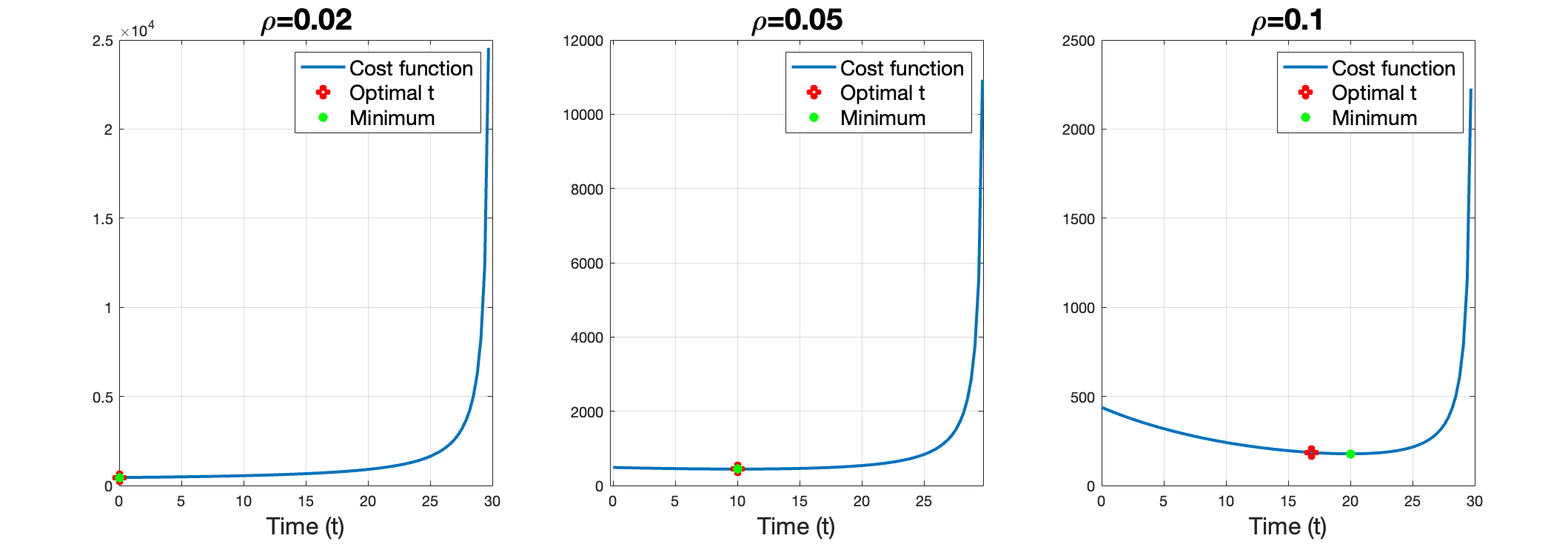}
    \caption{Cost function with the identified minimum under different discount factors}
    \label{fig1}
\end{figure}

\section{Case II: Two interventions}
In this section we will focus on the scenario involving two possible interventions. Our findings will reveal that it is never optimal to intervene more than once.

Consider $0 \le t_1 < t_2 \le T $ with associated capacity expansions $\xi_1,\xi_2 \in (0,\xi_M]$. Suppose that the initial capacity is given by $C_R(0^-)=0$, there is no fixed initial cost and the proportional cost at time $t$ is given by $K_v(t)= \kappa e^{-\rho t}$ for a discount rate $\rho >0 $ and a cost $\kappa \in \mathbb{R}^d_+$. 

Let $v=(t_1,t_2,\xi_1,\xi_2)$ be the decision variable we can control, then the function of the installed capacities is given by  $C_R(t) = \xi_1\mathbbm{1}{[t \ge t_1]} +\xi_2\mathbbm{1}{[t \ge t_2]} $ and the cost minimization problem is 
\begin{equation}
    \min_{t_i,\xi_i, i=1,2}\kappa \xi_1 e^{-\rho t_1} +\kappa \xi_2 e^{-\rho t_2} \qquad \textnormal{subject to } \qquad \mathbb{P}(E(T) > L ) = \varepsilon,
    \label{2intervention}
\end{equation}
where $L>0$ is a fixed target for the accumulated emissions and $\varepsilon > 0 $ a given threshold. 
The variable power production at time $t$ is now given by 
\begin{equation*}
    P_R(t) \coloneqq C_R(t)V(t)=(\xi_1\mathbbm{1}{[t \ge t_1]} +\xi_2\mathbbm{1}{[t \ge t_2]})V(t), 
\end{equation*}
with $C_F \in \mathbb{R}_+$ representing the maximal possible power production from fossil sources. 
Note that in this case 
\begin{equation*}
    D(t)-P_R(t) \coloneqq \begin{cases}
  D(t) & \textnormal{if } t < t_1,  \\
       D(t)-\xi_1V(t)  & \textnormal{if } t \in [t_1,t_2), \\ D(t)-\xi_1V(t) - \xi_2V(t)
   & \textnormal{if }  t\in (t_2,T].
\end{cases}
\end{equation*}
Moreover, assuming that $D(t) - P_R(t) \le C_F$, the total accumulated emissions at time $t$ corresponds to 
\begin{equation*}
\begin{aligned}
    E(T) =& 
    \int_0^{t_1}e_F D(s)ds + \int_{t_1}^{t_2}e_F(D(s)-\xi_1V(s))^+ds \\ &+ \int_{t_2}^{T}e_F(D(s)-\xi_1V(s)-\xi_2V(s))^+ds.
    \end{aligned}
\end{equation*}
Suppose again that $\forall s, \; D(s) = D, V(s) = V$ and satisfy $\bP(D < 2\xi_MV) \approx 0$ i.e. renewable energy production alone is not enough to supply the demand. Then $E$ becomes, 
\small
\begin{equation}
\begin{aligned}
    E(T)&=t_1e_F D + (t_2-t_1)e_F(D-\xi_1V )+ (T-t_2)e_F(D-\xi_1V-\xi_2V )
     \\&= e_F \big [ TD - V \big [ (T-t_1) \xi_1 + (T-t_2)\xi_2 \big ]\big ],
     \label{emissions}
    \end{aligned}
\end{equation} 
with corresponding partial derivatives with respect to $t_1$ and $t_2$ given as
\begin{equation*}
    \partial_{t_i} E = e_FV\xi_i \ge 0; \quad i=1,2.
\end{equation*}
For $\xi_1, \xi_2$ fixed, $E$ increases with these time variables. In fact, the earlier we act, the smaller emissions we will produce. 
On the other hand,
\begin{equation*}
    \partial_{\xi_i} E = -V(T-t_i) \le 0; \quad i=1,2.
\end{equation*}
Thus, for $t_1, t_2$ fixed, $E$ decreases in $\xi_i$. Clearly, the higher $\xi_i$ we choose, the less emission we will produce, but recall that the 
cost will also be higher. 

Using the emissions formula \eqref{emissions}, the constraint given in the optimization problem \eqref{2intervention} is equivalent to
 \begin{equation*}
\begin{aligned} 
    \mathbb{P} \bigg(  \frac{TD - \frac{L}{e_F}}{V} > (T - t_2)\xi_2 + (T-t_1)\xi_1 \bigg ) = \varepsilon.
\end{aligned}
\end{equation*}
Hence the quantity 
\begin{equation}
    z_\varepsilon \coloneqq(T - t_2)\xi_2 + (T-t_1)\xi_1
    \label{quant2}
\end{equation}is the $\varepsilon-$upper quantile of the random variable $\frac{TD - \frac{L}{e_F}}{V}$. Notice that, unlike the case with one intervention, for each given $t_1$ and $t_2$, we will have  an infinite number of possible capacity combinations $\xi_1, \xi_2$. \\
\subsection{Direct approach}
Here, we extend the one intervention case into two interventions and solve it analytically. 
\begin{theorem}
   For a given target set for accumulated emissions $L > 0$, and a threshold $\varepsilon > 0$ as described in \eqref{cond}, assuming that both the demand $D$ and capacity factor $V$ are time-independent random variables satisfying \eqref{DV} and such that the $\varepsilon-$upper quantile $z_\varepsilon$ satisfies \eqref{quantile}. Then, the optimal solution to  \eqref{2intervention} is to intervene once.
    \end{theorem}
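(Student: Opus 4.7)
To prove the claim, the plan is to reduce the two-intervention problem to one in which Theorem~\ref{1int} applies directly, via a substitution that decouples each intervention's cost from its emission contribution. For $i=1,2$, I set $\eta_i := \xi_i(T-t_i)$, which represents the emission reduction delivered by intervention $i$. The emission constraint~\eqref{quant2} then becomes the linear equality $\eta_1 + \eta_2 = z_\varepsilon$ with $\eta_i > 0$, while the cost rewrites as
\begin{equation*}
\kappa \xi_1 e^{-\rho t_1} + \kappa \xi_2 e^{-\rho t_2} = \kappa\bigl(\eta_1 f(t_1) + \eta_2 f(t_2)\bigr), \qquad f(t) := \frac{e^{-\rho t}}{T-t}.
\end{equation*}

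Since $\eta_1 + \eta_2 = z_\varepsilon$ with $\eta_i > 0$, an elementary convex-combination argument yields
\begin{equation*}
\kappa\bigl(\eta_1 f(t_1) + \eta_2 f(t_2)\bigr) \ge \kappa z_\varepsilon \inf_{t \in [0,T)} f(t).
\end{equation*}
A direct computation gives $f'(t) = e^{-\rho t}\bigl(1 - \rho(T-t)\bigr)/(T-t)^2$, so $f$ has a unique stationary point $t^* = T - 1/\rho$, which is the global minimum on $[0,T)$ whenever $t^* \in [0,T)$; in the complementary regime $f$ is monotone on $[0,T)$ and the infimum is attained at an endpoint. In each of the three $\rho$-regimes of Theorem~\ref{1int}, one then checks by substitution that $\kappa z_\varepsilon \inf f$ coincides exactly with the one-intervention optimum $\kappa \hat{\xi}_\varepsilon e^{-\rho \hat{t}}$.

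The lower bound is saturated only when $f(t_1) = f(t_2) = \inf f$, which forces $t_1 = t_2$ and therefore contradicts the strict ordering $t_1 < t_2$. Consequently, no genuine two-intervention configuration can strictly beat the one-intervention optimum, and the infimum is approached only in the degenerate limit where both intervention times collapse onto the single optimal time of Theorem~\ref{1int} --- i.e., by effectively intervening only once. The main technical obstacle is the upper capacity constraint $\xi_i \le \xi_M$, which restricts the admissible range of each $\eta_i$ to $(0, (T-t_i)\xi_M]$. In the regimes corresponding to cases 2 and 3 of Theorem~\ref{1int} this bound can become binding, and the argument then requires a vertex analysis of the linear program in $(\eta_1,\eta_2)$ for fixed $(t_1,t_2)$, followed by a monotonicity check over admissible $(t_1,t_2)$, to confirm that the one-intervention answer remains optimal.
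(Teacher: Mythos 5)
Your argument is correct where it is complete, but it takes a genuinely different route from the paper's. The paper eliminates $\xi_1$ through the constraint \eqref{quant2} and enumerates the stationary points of the resulting objective in $(t_1,t_2,\xi_2)$, finding that every critical point degenerates to a single intervention at $\hat t = T-1/\rho$; this only identifies candidates and then leans on Theorem \ref{1int}. Your substitution $\eta_i=\xi_i(T-t_i)$ instead turns \eqref{2intervention} into minimizing $\eta_1f(t_1)+\eta_2f(t_2)$ over the simplex $\eta_1+\eta_2=z_\varepsilon$, and the bound $\eta_1f(t_1)+\eta_2f(t_2)\ge z_\varepsilon\inf f$ is a \emph{global} statement: it dominates every admissible two-intervention strategy by the one-intervention optimum in one stroke, and since $f$ has a unique minimizer on $[0,T)$, equality forces $t_1=t_2$, so any genuine two-intervention plan is strictly worse. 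In the regimes of cases 1 and 2 of Theorem \ref{1int}, where the cap $\xi\le\xi_M$ is not binding at the one-intervention optimum, your proof is complete and in fact tighter than the paper's, which never checks global optimality of its critical points.

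The gap is exactly where you flagged it, but it is worse than a deferred computation: in the regime of case 3 the conclusion you promise to ``confirm'' is false under the constraint as written. With individual caps $\xi_i\in(0,\xi_M]$ the two-intervention problem admits total capacity up to $2\xi_M$ (consistent with the section's standing assumption $\bP(D<2\xi_MV)\approx 0$), so its feasible set is strictly larger than the one-intervention set. Take $\xi_1=\xi_2=\xi_M$ and $T-t_1$, $T-t_2$ both close to $z_\varepsilon/(2\xi_M)$ with $t_1<t_2$: the emission constraint $(T-t_1)\xi_1+(T-t_2)\xi_2=z_\varepsilon$ holds, and the cost is approximately $2\kappa\xi_M e^{-\rho(T-z_\varepsilon/(2\xi_M))}$, which is strictly smaller than the single-intervention optimum $\kappa\xi_M e^{-\rho(T-z_\varepsilon/\xi_M)}$ whenever $\rho z_\varepsilon/\xi_M>2\ln 2$ --- a nonempty sub-case of $\rho>\xi_M/z_\varepsilon$. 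So no vertex analysis can rescue the one-intervention answer there. To be fair, the paper's own proof does not cover this regime either (its closing paragraph explicitly restricts to $\rho<\xi_M/z_\varepsilon$); the honest fix is to restrict the theorem to $\rho\le\xi_M/z_\varepsilon$, or to impose the cap on the \emph{total} capacity $\xi_1+\xi_2\le\xi_M$, in which case your simplex argument does close all cases.
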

    \begin{proof}
From the emission constraint \eqref{quant2}, we can write 
$$ \xi_1= \frac{1}{T-t_1}\left[ z_\varepsilon - \xi_2 (T-t_2) \right],$$
and the minimization problem becomes 
$$\min_{t_1,t_2, \xi_2} \kappa\frac{1}{T-t_1}\left[ z_\varepsilon - \xi_2 (T-t_2) \right]e^{-\rho t_1}+\kappa\xi_2e^{-\rho t_2}. $$ 
First order optimality conditions give:
\begin{itemize}
    \item For $t_1$
\begin{align}
    &\frac{\partial}{\partial t_1} 
\left[\frac{1}{T-t_1}\left[ z_\varepsilon - \xi_2 (T-t_2) \right]e^{-\rho t_1}+\xi_2e^{-\rho t_2} \right] \nonumber 
\\&\qquad= \left[ z_\varepsilon - \xi_2 (T-t_2) \right]  \frac{1}{(T-t_1)^2}e^{-\rho t_1} \left[ \frac{1}{T-t_1} - \rho \right]
 \label{1.1}  
\\&\qquad= 0,  \nonumber
\end{align}
    \item For $t_2$
\begin{align}
    &\frac{\partial}{\partial t_2} \left[\frac{1}{T-t_1}\left[ z_\varepsilon - \xi_2 (T-t_2) \right]e^{-\rho t_1}+\xi_2e^{-\rho t_2} \right]  \label{2.1}\\&\qquad= \xi_2 \left[ \frac{1}{T-t_1}e^{-\rho t_1}  - \rho e^{-\rho t_2} \right]\nonumber 
\\&\qquad= 0, \nonumber
\end{align}    
    \item For $\xi_2$ 
\begin{align}
       &\frac{\partial}{\partial \xi_2} \left[\frac{1}{T-t_1}\left[ z_\varepsilon - \xi_2 (T-t_2) \right]e^{-\rho t_1}+\xi_2e^{-\rho t_2} \right] \label{3.1} \\ &\qquad= e^{-\rho t_2} - \frac{1}{T-t_1}(T-t_2) e^{-\rho t_1} \nonumber 
\\&\qquad= 0.   \nonumber
\end{align}
\end{itemize}
Consequently, the critical points will follow.

Start looking at \eqref{2.1}, we can assume $\xi_2 \ne 0$, otherwise we are in the case with only one intervention. It follows that 
$$e^{-\rho \hat{t}_2} = \frac{1}{(T-\hat{t}_1)\rho}e^{-\rho \hat{t}_1},$$
and by substituting in \eqref{3.1} we obtain
$$\frac{1}{(T-\hat{t}_1)\rho}e^{-\rho \hat{t_1}} - \frac{1}{(T-\hat{t}_1)}(T-\hat{t}_2) e^{-\rho \hat{t}_1} = 0. 
 $$ This leads to 
 \begin{equation*}
     \frac{1}{\rho}- (T-\hat{t}_2) = 0, 
     \end{equation*}
and thus     \begin{equation*}
         \hat{t}_2= T-\frac{1}{\rho}.
 \end{equation*}
By substituting in \eqref{1.1}, the equality holds under the condition $$\hat{t}_1= T - \frac{1}{\rho},$$ which implies the only critical points correspond to $\hat{t}_1=\hat{t}_2$ and this is the case with only one intervention.    The other condition for having \eqref{1.1} equal to zero is \begin{equation*}
    z_\varepsilon = \hat{\xi}_2(T-\hat{t}_2),
\end{equation*} 
which implies
\begin{equation*}
    (T-\hat{t}_1)\hat{\xi}_1 = 0. 
\end{equation*}
We are not interested in the case $\hat{\xi}_1=0$, since it consists of the case with one intervention. While if $\hat{\xi}_1 \ne 0$, $\hat{t}_1=T$ this is not feasible with the emission constraints. So the only possible optimal solution is the one with one intervention. 

We got that the only critical point corresponds to wait until $\hat{t}= T - \frac{1}{\rho}$, whenever $\rho > \frac{1}{T}$ (to ensure $t \ge 0$) and $\rho < \frac{\xi_M}{z_\varepsilon}$ to ensure $\hat{\xi} = \rho z_\varepsilon < \xi_M$. That is the same conclusion of the case with only one intervention and the optimal solution follows from Theorem \ref{1int}.
 \end{proof}
 \newpage
We shall give an alternative approach to deal with the 'two-intervention' case. 

\subsection{Alternative approach}
One could think about trying to optimize the problem over $t_2$ and $\hat{\xi}_2$, assuming to know the values of $\hat{t}_1$ and $\hat{\xi}_1$. The problem becomes \begin{equation*}
    \min_{t_2,\xi_2} \kappa \xi_2 e^{-\rho t_2} \qquad \textnormal{s.t } \qquad \xi_2\le\xi_M, \; \;t_2 \in [\hat{t}_1,T], \;\; (T-\hat{t}_1)\hat{\xi}_1+(T-t_2)\xi_2=z_\varepsilon.
\end{equation*}
Now going back to the probability constraint \eqref{quantile} we have that
\begin{equation*}
\begin{aligned}
    \mathbb{P} \bigg(  \frac{TD - \frac{L}{e_F}}{V} - (T-\hat{t}_1)\hat{\xi}_1 > (T - t_2)\xi_2\bigg ) = \varepsilon,
\end{aligned}
\end{equation*}
and hence the quantity $\tilde{z}_\varepsilon=(T - t_2)\xi_2 $ is the $\varepsilon-$upper quantile of the random variable $$\frac{TD - \frac{L}{e_F}}{V}-(T-\hat{t}_1)\hat{\xi}_1$$. 

With a similar strategy as before we find $\hat{t}_2= T - \frac{1}{\rho}$ and $\hat{\xi}_2=\rho \tilde{z}_\varepsilon$.
 Next, we optimize the values $t_1$ and $\xi_1$, i.e., the problem becomes to minimize 
\begin{equation*}
    \min_{t_1,\xi_1} \kappa \xi_1 e^{-\rho t_1}+ \rho\tilde{z}_\varepsilon e^{-\rho(T-1/\rho)} \qquad \textnormal{s.t } \qquad \xi_1\le\xi_M, \; \;t_1 \in [0,t_2], \;\; (T-t_1)\xi_1+(T-\hat{t}_2)\hat{\xi}_2=z_\varepsilon,
\end{equation*}
that is 
\begin{equation*}
    \min_{t_1,\xi_1} \kappa \xi_1 e^{-\rho t_1} \qquad \textnormal{s.t } \qquad \xi_1\le\xi_M, \; \;t_1 \in [0,t_2], \;\; (T-t_1)\xi_1+(T-\hat{t}_2)\hat{\xi}_2=z_\varepsilon.
\end{equation*}
This has $\hat{t}_1 = T - \frac{1}{\rho}$ as solution, while the quantity $z_\varepsilon$ becomes 
\begin{equation*}
    z_\varepsilon= (T-\hat{t}_1)\hat{\xi}_1+(T-\hat{t}_2)\hat{\xi}_2= \frac{1}{\rho}\rho\tilde{z}_\varepsilon + \frac{1}{\rho}\hat{\xi}_1 \end{equation*}
    which implies \begin{equation*}
         \hat{\xi}_1= \rho (z_\varepsilon - \tilde{z}_\varepsilon ). 
\end{equation*}
Notice that $\hat{t}_1=\hat{t}_2$ and $\hat{\xi}_1 + \hat{\xi}_2 = \rho \tilde{z}_\varepsilon +\rho z_\varepsilon -\rho \tilde{z}_\varepsilon = \rho z_\varepsilon$, recovering the solution of the case with one intervention.

\subsection{Case with maximum total capacity}
    An alternative strategy might be to set a maximum total capacity, $\xi_M^{tot}$ such that $\xi_1+\xi_2=\xi^{tot}_M$ i.e. we want to find the optimal installation strategy such that in the entire operation we install as much capacity as possible. The problem is \begin{equation}
        \label{2intervention2}
   \min_{t_i, \xi_i, i=1,2} \xi_1 e^{-\rho t_1} + \xi_2e^{-\rho t_2} \quad \text{s.t} \quad \xi_1+\xi_2=\xi^{tot}_M, \quad z_\varepsilon=(T-t_2)\xi_2 + (T-t_1)\xi_1. \end{equation}
 It follows 
 \begin{equation*}
     z_\varepsilon =(T-t_2)(\xi^{tot}_M-\xi_1) + (T-t_1)\xi_1 
     = (t_2-t_1)\xi_1 + (T-t_2)\xi^{tot}_M,
     \end{equation*}
     and thus
     \begin{equation*}
         \xi_1 = \frac{1}{t_2-t_1}\left [ z_\varepsilon-(T-t_2)\xi^{tot}_M\right].
 \end{equation*}
Problem \eqref{2intervention2} becomes 
 $$\min_{t_i, \xi_i, i=1,2} \; \xi^{tot}_M e^{-\rho t_2} + \frac{1}{t_2-t_1}\left [ z_\varepsilon-(T-t_2)\xi^{tot}_M\right] \left ( e^{-\rho t_1} - e^{-\rho t_2} \right). $$
First order optimality conditions become 
\begin{itemize}
    \item For $t_1$ \begin{equation}
     \begin{aligned}
     \label{1.1.1}
         &\frac{\partial}{\partial t_1}\left[\xi_M e^{-\rho t_2} + \frac{1}{t_2-t_1}\left [ z_\varepsilon-(T-t_2)\xi^{tot}_M\right] \left ( e^{-\rho t_1} - e^{-\rho t_2} \right) \right]  \\ &\qquad= \left [ z_\varepsilon-(T-t_2)\xi^{tot}_M\right] \left [ \left ( \frac{1}{(t_2-t_1)^2}- \frac{\rho}{t_2-t_1} \right ) e^{-\rho t_1} - \frac{1}{(t_2-t_1)^2}e^{-\rho t_2} \right ]\\&\qquad=0.
     \end{aligned}
 \end{equation}
 \item For $t_2$
 \begin{equation}
     \begin{aligned}
     \label{1.2.1}
         &\frac{\partial}{\partial t_2}\left[\xi^{tot}_M e^{-\rho t_2} + \frac{1}{t_2-t_1}\left [ z_\varepsilon-(T-t_2)\xi^{tot}_M\right] \left ( e^{-\rho t_1} - e^{-\rho t_2} \right) \right] \\&\qquad= -\rho\xi^{tot}_M e^{-\rho t_2} -\frac{1}{(t_2-t_1)^2}\left [ z_\varepsilon-(T-t_2)\xi^{tot}_M\right]\left ( e^{-\rho t_1} - e^{-\rho t_2} \right)  \\
         &\qquad\qquad+ \frac{1}{t_2-t_1}\xi^{tot}_M \left ( e^{-\rho t_1} - e^{-\rho t_2} \right) \\ &\	\qquad\qquad +
         \frac{1}{t_2-t_1}\left [ z_\varepsilon-(T-t_2)\xi^{tot}_M\right](\rho e^{-\rho t_2})\\ &\qquad =0.
     \end{aligned}
 \end{equation}
 \end{itemize}
First-order optimality condition \eqref{1.1.1} holds if
\begin{equation*}
    z_\varepsilon = (T-t_2)\xi^{tot}_M,
\end{equation*}
which implies
\begin{equation*}
     \hat{\xi}_1=0 \quad \text{and} \quad  \hat{\xi}_2= \xi^{tot}_M.
\end{equation*}
I.e., we should intervene once. If instead
 $$\left ( \frac{1}{(\hat{t}_2-\hat{t}_1)^2}- \frac{\rho}{\hat{t}_2-\hat{t}_1} \right ) e^{-\rho \hat{t}_1} - \frac{1}{(\hat{t}_2-\hat{t}_1)^2}e^{-\rho \hat{t}_2}=0,  $$
 we obtain the equation
 \begin{equation*}
      e^{-\rho \hat{t}_2} = \left ( 1-\rho(\hat{t}_2-\hat{t}_1) \right) e^{-\rho \hat{t}_1}.
 \end{equation*}
 By substituting in \eqref{1.2.1}, we get
 \begin{equation*}
     \begin{aligned}
    0&=\rho^2(\hat{t}_2-\hat{t}_1)\xi^{tot}_M - \rho^2z_\varepsilon+ \rho^2(T-\hat{t}_2)\xi^{tot}_M, \\
          z_\varepsilon&=\xi^{tot}_M \left [ T -\hat{t}_2 + \hat{t}_2 - \hat{t}_1 \right ], 
     \end{aligned}
 \end{equation*}
 that is 
 \begin{equation*}
     \hat{t}_1= T-\frac{z_\varepsilon}{\xi^{tot}_M}.
 \end{equation*}
We get $\hat{t}_2$ from 
 $$ e^{-\rho \hat{t}_2} = \left ( 1-\rho(\hat{t}_2-\hat{t}_1) \right) e^{-\rho \hat{t}_1}.$$
 In particular, by considering \begin{equation*}
     g(t_2)= e^{-\rho t_2} - \left ( 1-\rho(t_2-\hat{t}_1) \right) e^{-\rho \hat{t}_1},
 \end{equation*}
 we see that $g(\hat{t}_1) = 0$ and 
 \begin{equation*}
     \begin{aligned}
         \frac{\partial}{\partial t_2} g(t_2) = -\rho e^{\rho t_2}+ \rho e^{-\rho ( T- z_\varepsilon/\xi_M) },
     \end{aligned}
 \end{equation*}
 changes its sign only once and exactly at $\hat{t}_2 = \hat{t}_1$. Thus, we have again shown that it is optimal to intervene only once.

\section{Case III: Multiple interventions}

In this section, we generalize the cases addressed above, we will show that even when it is admissible to intervene at $N$ different times, it will always be more optimal to intervene only once. 
 
Consider the deterministic times $0 \le t_1 < \ldots < t_N \le T $ with associated deterministic capacity expansions $\xi_1,\ldots,\xi_N \in (0,\xi_M]$. 
Let $v=(t_1,\ldots,t_N,\xi_1,\ldots,\xi_N)$ be the decision variable we can control. Then the function of the installed capacities is given by  $C_R(t) = \sum_{i=1}^N\xi_i\mathbbm{1}{[t \ge t_i]} $ and the cost minimization problem is 
\begin{equation}
    \min_{v\in\mathcal A)} \sum_{i=1}^N \kappa \xi_i e^{-\rho t_i} \qquad \textnormal{subject to } \qquad \mathbb{P}(E(T) > L ) = \varepsilon,
    \label{Nintervention}
\end{equation}
where $L>0$ is a fixed target for the accumulated emissions and $\varepsilon > 0 $ a given threshold.

Supposing that $\forall s, \; D(s) = D, V(s) = V$ and also that $\mathbb{P}(D < N\xi_M V) \approx 0$, i.e., $D-N\xi_MV> 0 $ $a.s.$ 
This means
that renewable energy alone will not be enough to supply the demand. Then the accumulated emissions up to the final time are given by:
\begin{equation*}
\begin{aligned}
    E(T)&= e_F \left( TD - V \sum_{i=1}^N (T-t_i) \xi_i \right).
    \end{aligned}
\end{equation*}
\begin{theorem}
  For a given target set for accumulated emissions $L > 0$, and a threshold $\varepsilon > 0$ as described in \eqref{cond}, assuming that both demand $D$ and capacity factor $V$ are time-independent random variables satisfying \eqref{DV} and such that the $\varepsilon-$upper quantile $z_\varepsilon$ satisfies \eqref{quantile}. Then, the optimal solution to \eqref{Nintervention} is to intervene once.
    \end{theorem}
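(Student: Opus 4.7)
The plan is to generalize the direct approach of Section~4 and combine it with an induction on $N$. Under the standing simplifications on $D$ and $V$ the probability constraint in \eqref{Nintervention} collapses to the affine equality
\begin{equation*}
z_\varepsilon \;=\; \sum_{i=1}^{N}(T-t_i)\xi_i,
\end{equation*}
with $z_\varepsilon$ the same $\varepsilon$-upper quantile of $(TD-L/e_F)/V$ that appeared in Theorem~\ref{1int}. I would solve this constraint for $\xi_1$, substitute into the cost, and reduce \eqref{Nintervention} to an unconstrained minimization in the $2N-1$ variables $(t_1,\ldots,t_N,\xi_2,\ldots,\xi_N)$ with objective
\begin{equation*}
F \;=\; \frac{\kappa e^{-\rho t_1}}{T-t_1}\Big[\,z_\varepsilon - \sum_{j=2}^{N}(T-t_j)\xi_j\,\Big] + \kappa\sum_{j=2}^{N}\xi_j e^{-\rho t_j}.
\end{equation*}

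Next I would examine the first-order conditions for the ``later'' variables. Stationarity in $\xi_j$ gives $e^{-\rho\hat t_j}=(T-\hat t_j)e^{-\rho\hat t_1}/(T-\hat t_1)$, while stationarity in $t_j$ (assuming $\hat\xi_j>0$) gives $e^{-\rho\hat t_j}=e^{-\rho\hat t_1}/\bigl[\rho(T-\hat t_1)\bigr]$. Eliminating $e^{-\rho\hat t_j}$ between the two forces $\rho(T-\hat t_j)=1$, so every active $\hat t_j$ equals $T-1/\rho$ independently of $j$. The $t_1$-derivative then factors exactly as in \eqref{1.1}, and vanishes iff either $\hat\xi_1=0$ or $\rho(T-\hat t_1)=1$. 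The first alternative drops an intervention and triggers the induction hypothesis; the second makes $\hat t_1$ coincide with every active $\hat t_j$, in contradiction with the strict ordering $t_1<\cdots<t_N$ unless only one $\hat\xi_i$ is nonzero. In either branch one is left with a single active intervention, which, together with the emission constraint, recovers the solution of Theorem~\ref{1int}.

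To close the argument I would verify the boundary regimes exactly as in the one-intervention analysis: when $T-1/\rho<0$ the optimum collapses to $\hat t=0$, and when the capacity bound binds, to $\hat t=T-z_\varepsilon/\xi_M$ with $\hat\xi=\xi_M$. The main obstacle lies not in the first-order calculation but in the bookkeeping around the strict ordering $t_1<\cdots<t_N$ and admissibility $\xi_i\in(0,\xi_M]$: the reduced Lagrangian is blind to these, so one must phrase ``at most one active $\hat\xi_i$ at a critical point'' as the statement that the infimum is attained in the limit where all but one $\xi_i$ degenerate to zero, which justifies relabelling the optimum as a one-intervention strategy. An equally clean alternative is a pure induction: fix $(\hat t_N,\hat\xi_N)$, apply the $(N-1)$-intervention hypothesis to the residual problem with modified quantile $z_\varepsilon-(T-\hat t_N)\hat\xi_N$, and finish with the two-intervention theorem already established in Section~4.
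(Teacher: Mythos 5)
Your proposal is correct and follows essentially the same route as the paper: substitute the emission constraint to eliminate $\xi_1$, derive the first-order conditions in $t_j$ and $\xi_j$ for $j\ge 2$, combine them to force every active $\hat t_j = T-1/\rho$, and reduce to the one-intervention solution of Theorem~\ref{1int}. If anything, you are more explicit than the paper about the endgame --- the two branches of the $t_1$-condition, the strict-ordering bookkeeping, the boundary regimes, and the induction that formally collapses $N$ interventions to one --- where the paper simply asserts that one is ``led back'' to the two-intervention case.
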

    \begin{proof}
The problem now is
 $$\min_{t_i, \xi_i \; i=1,\ldots, n} \sum_{i=1}^n \xi_i e^{-\rho t_i} \quad \text{s.t.} \quad z_\varepsilon = \sum_{i=1}^n \xi_i (T-t_i).$$ 
 Thus, we can write 
 $$ \xi_1= \frac{1}{T-t_1} \left [ z_\varepsilon - \sum_{i=2}^n \xi_i (T-t_i) \right],$$
 and the problem becomes 
 $$ \min_{t_i,\xi_i \; i=1,\ldots, n}    \frac{1}{T-t_1} \left [ z_\varepsilon - \sum_{i=2}^n \xi_i (T-t_i) \right]e^{-\rho t_1} + \sum_{i=2}^n \xi_i e^{-\rho t_i}.$$ 
 The first order optimality conditions become,
 \begin{itemize}
    \item For $t_1$
 \begin{equation*}
\begin{aligned}
    &\frac{\partial}{\partial t_1} \left[\frac{1}{T-t_1} \left [ z_\varepsilon - \sum_{i=2}^n \xi_i (T-t_i) \right]e^{-\rho t_1} + \sum_{i=2}^n \xi_i e^{-\rho t_i} \right] \\&\qquad= \left[ z_\varepsilon - \sum_{i=2}^n \xi_i (T-t_i)   \right] \left[ \frac{1}{(T-t_1)^2}e^{-\rho t_1} - \frac{\rho}{T-t_1}e^{-\rho t_1}\right] \\&\qquad= 0. 
    \end{aligned}
\end{equation*}
\item  For $t_i$
\begin{equation}
\begin{aligned}
    &\frac{\partial}{\partial t_i} \left[\frac{1}{T-t_1} \left [ z_\varepsilon - \sum_{i=2}^n \xi_i (T-t_i) \right]e^{-\rho t_1} + \sum_{i=2}^n \xi_i e^{-\rho t_i} \right]\\ &\qquad= \frac{1}{T-t_1}\xi_ie^{-\rho t_1} - \rho\xi_ie^{-\rho t_i} = 0; \quad \forall \; i=2,\ldots,n. \label{5}
    \end{aligned}
\end{equation}
    \item For $\xi_i$
\begin{equation}
\begin{aligned}
    &\frac{\partial}{\partial \xi_i} \left[\frac{1}{T-t_1} \left [ z_\varepsilon - \sum_{i=2}^n \xi_i (T-t_i) \right]e^{-\rho t_1} + \sum_{i=2}^n \xi_i e^{-\rho t_i} \right] \\ &\qquad= e^{-\rho t_i} - \frac{1}{T-t_1}(T-t_i) e^{-\rho t_1} = 0;  \quad \forall \; i=2,\ldots,n, \label{6}
    \end{aligned}
\end{equation}
\end{itemize}
 From \eqref{6}, we have
 $$e^{-\rho \hat{t}_i} =  \frac{1}{T-\hat{t}_1}(T-\hat{t}_i) e^{-\rho \hat{t}_1}, $$ and substituting in \eqref{5}
 $$ \frac{1}{T-\hat{t}_1}\xi_ie^{-\rho \hat{t}_1} - \rho \xi_i \frac{1}{T-\hat{t}_1}(T-\hat{t}_i) e^{-\rho \hat{t}_1}  = 0,$$
 we find
 $$ 1-\rho(T-\hat{t}_i) = 0 \quad \Rightarrow \quad \hat{t}_i = T -  \frac{1}{\rho} \quad \forall \; i = 2, \ldots, n.$$
 We lead back to the case with two interventions previously addressed, with optimality reached by one intervention only.
    \end{proof}
\bigskip


\subsection{Conclusions}
Our main findings show that, under uncertainty, delaying green investments might be advantageous. This outcome contrasts with the conclusions of Victoria \textit{et al.} \cite{VBG}. Additionally, we determine that a single investment is more effective than two or multiple ones. This perspective differs from the stepwise installation of renewables commonly planned by many authorities. 

For future research, we could explore using random intervention times instead of predetermined ones. Additionally, investigating stochastic dynamics for both demand and capacity, as well as examining multidimensional scenarios involving multiple technologies and locations, could provide further insights.

It may also be of interest to note that from the perspective of investors, the "all at once" approach for renewable energy installation may raise concerns. Such a strategy could lead to a sudden surge in demand for materials, labor and other resources, with possible supply chain disruptions and cost increases.



\end{document}